\documentclass[11pt,twoside]{article}
\usepackage[mathscr]{eucal}
\usepackage{amsbsy}
\usepackage{amssymb}
\usepackage{amsmath}
\usepackage{amsthm}
\usepackage{graphics}

\newcommand{\brc}[1]{\left\lbrace #1 \right\rbrace}
\newcommand{\abs}[1]{\vert #1 \vert}

\newcommand{\absss}[1]{\Bigl\vert #1 \Bigr\vert}



\newcommand{\norm}[1]{\Vert #1 \Vert}


\def\phi{\varphi}
\def\bbn{{\mathbb N}}
\def\bbr{{\mathbb R}}
\def\bbh{{\mathbb H}}


\def\a{\alpha}
\def\b{\beta}
\def\d{\delta}
\def\e{\varepsilon}
\def\g{\gamma}
\def\G{\Gamma}
\def\l{\lambda}
\def\k{\kappa}
\def\z{\zeta}
\def\s{\sigma}

\def\om{\omega}
\def\vp{\varphi}

\def\ll{{\bold l}}

\def\wh{\widehat}

\def\H{{\mathscr H}}
\def\=A8{\"o}

\theoremstyle{plain}
\newtheorem{teo}{Theorem}

\newtheorem{prop}[teo]{Proposition}
\theoremstyle{definition}

\newtheorem{example}{Example}
\newtheorem{remark}[teo]{Remark}

\title{On small balls problem for stable measures in a Hilbert space
\footnotemark[0]\footnotetext[0]{%
\textit{MSC 2000 subject classifications}. Primary-60B11, 60E07 .}
\footnotemark[0]\footnotetext[0]{ \textit{Key words and phrases}.
Hilbert space, stable measures, small balls.}
\author{\sc  Vygantas {\sc Paulauskas}}}

\date{\it Vilnius University,
Department of Mathematics and Informatics }

\begin{document}

\maketitle
\begin{abstract}
In the paper the old results on probabilities of small balls for
stable measures in a Hilbert space, obtained in 1977 and remaining
unpublished, are presented. Apart of historical value these results
are interesting even now, since they are comparable with recently
obtained ones.
\end{abstract}
\vfill
\eject


\section{Introduction}
The story of this paper is rather unusual, it contains the results
obtained  during my stay at Gothenburg university during 1976-77
academic year. During this year I was dealing with infinitely
divisible and stable measures on Hilbert and Banach spaces
(description and properties of these measures, rates of the
convergence to stable laws, small ball problem and Law of the
iterated law for stable measures ) and all obtained results were
presented as two Chalmers university of Technology and University of
G{\" o}teborg preprints (see \cite{paulIII} and \cite{paulIV}). At
this time as a Soviet Union citizen I had almost no experience of
publishing papers outside Soviet Union (the only one paper
\cite{paul} appeared abroad, since prof P.R. Krishnaiah, as Editor
of J. of Multivariate Analysis, being in Vilnius took my manuscript
with himself to USA), so I brought these preprints to Vilnius. But
there were new results to be published, therefore results from
preprints remained unpublished, only four years later part of the
results were incorporated in papers \cite{paulV},\cite{paulrac}. The
rest of results, among them results on small balls for stable
measures, remained unpublished till now. In May 2008 at the
conference "High dimensional probability" in Luminy after Mikhail
Lifshits survey talk on small ball problem I had mentioned to him
that in 1977 I had dealt a little bit with this problem for stable
measures in a Hilbert space. After returning to Vilnius I sent to
St. Petersburg preprint \cite{paulIV}. After some time I got
Mikhail's e-mail saying that these results are not only interesting
from historical perspective, but some of them are comparable with
the results obtained recently. He put this preprint on the
bibliography list on small balls problem (see \cite{lif}) and
encouraged me to prepare a paper and to put it into ARXIV. But again
preparing the paper took a year... Also here it is necessary to
note, that J. Hoffmann-Jorgensen's preprint \cite{hofmjorg}was
inspiring my investigations in 1977, and at this time for me it was
the only one source on the problem (later most of the results of
this preprint were included in \cite{hofmandudley}). As a matter of
fact, looking at the bibliography \cite{paulIV} one can find only
few papers which appeared before 1975. Also the paper is unusual
since there is no comparison of our estimates  with recently
obtained results. I had not followed the development of this field
during last decades and such task would be too difficult for me.

Thus the next section contains almost unchanged text from
\cite{paulIV}, only small changes or explanations were made in order
to make the text understandable.

\section{Results}
 Through all the paper  ${\bbh}$ will stand for  a real
separable Hilbert space with a norm $||\cdot ||$ and a scalar
product $(\cdot,\cdot).$ Without loss of generality we shall take
$\bbh =\ll_2,$ thus for $x=(x_1, \dots, x_n,\dots,) \ y=(y_1,
\dots, y_n,\dots,)(x,y)=\sum_{i=1}^\infty x_iy_i.$ Also we shall
use the norm $||x||_\infty=\sup_i|x_i|.$ We denote $S_r(a)=\{x\in
\bbh: ||x-a||\le r \}, \ a\in \bbh, \ S_r=S_r(0), U_r=\{x\in \bbh:
||x||= r \}, \quad D_a=\{x\in \bbh: ||x||_\infty \le a, \ a>0 \}.$

 The method, which we use is the
same as in \cite{hofmjorg}, where bounds for small balls were
obtained in the Gaussian case.

Let us have a stable distribution $ \mu $ with a characteristic
function (ch.f.)

\begin{align}
\label{mod1} \wh{\mu}(y) = \exp \brc{-\frac{1}{2}
\sum_{i=1}^{\infty} \abs{y_{i}}^{\a} \l_{i}},
\end{align}
where $ \l = (\l_{1}, \l_{2}, ...) \in l_{1}  $, $  \l_{1} \geq
\l_{2} \geq , ..., \geq 0 $ and let $ \s_{\a}(x) $ be a
differentiable decreasing function on $ [1,\infty) $ such that $
\s_{\a}(n) \leq \l_{n}^{\frac{1}{\a}}, n = 1,2,... $

Let us denote by $p_\a(x)$ a density function of an
one-dimensional symmetric stable law with the ch.f.
$g_\a(t)=\exp\{-|t|^\a/2\}$ and let
$p_{j,\a}(x)=\l_j^{-1/\a}p_\a(x\l_j^{-1/\a})$. Let $ \vp(x)=(2
p_{\a}(0) \s_{\a}(x))^{-1} $ and $ \nu: \bbr_{+} \rightarrow \bbn
$ (= the set of positive integers) such, that

\begin{align*} \e\vp(\nu(\e)) \leq 1 \end{align*} for all $ 0 < \e < (\vp(1))^{-1}
$. Let

\begin{align*}
H(y) = \int_{1}^{y} \dfrac{x\vp'(x)}{\vp(x)} dx
\end{align*}

We put $ \Psi(x) = C_{0}(\a) (\s_{\a}(x) \sqrt{x})^{-1} $, where $
C_{0}(\a) $ is given in \eqref{mod8}, $ \eta: \bbr_{+} \rightarrow
\bbn $ such that $ \e\Psi(\eta(\e)) \leq 1 $, and

\begin{align*}
K(y) = \int_{1}^{y} \dfrac{x\Psi'(x)}{\Psi(x)} dx
\end{align*}

\begin{prop}
\label{prop1} For all $ 0 < \a \le 2 $ and $ 0 < \e < (\max
\vp(1), \Psi(1))^{-1} $ the following estimates hold:
  \begin{align}
  \label{mod2}
     \mu(D_{\e}) \leq C(\a ,\l_{1}) \exp \brc{-H(\nu(\e)) -
     \log(\e)},
  \end{align}

  \begin{align}
  \label{mod3}
     \mu(S_{\e}) \leq C(\a ,\l_{1}) \exp \brc{-K(\eta(\e)) -
     \log(\e)},
  \end{align}
and, if $ \log \s_{\a}(x) $ is convex, then
  \begin{align}
  \label{mod4}
     \mu(D_{\e}) \leq C(\a ,\l_{1}) \exp \brc{-H(\nu(\e)) - \frac{1}{2} \log
     \e},
  \end{align}

  \begin{align}
  \label{mod5}
     \mu(S_{\e}) \leq C(\a ,\l_{1}) \exp \brc{-K(\eta(\e)) - \frac{1}{4}
     \log(\e^{2}\eta(\e))}.
  \end{align}
\end{prop}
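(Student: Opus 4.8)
\emph{Plan.} Write $X=(X_1,X_2,\dots)$ for the random element with law $\mu$. By \eqref{mod1} the coordinates $X_i$ are independent, $X_i$ symmetric $\a$-stable with an even unimodal density $p_{i,\a}$ and $p_{i,\a}(0)=\l_i^{-1/\a}p_\a(0)$. For the cube bounds \eqref{mod2} and \eqref{mod4} I would use independence, $\mu(D_\e)=\prod_{i\ge1}\PP(\abs{X_i}\le\e)$, together with the two elementary per-coordinate estimates $\PP(\abs{X_i}\le\e)\le 2\e\,p_{i,\a}(0)=2p_\a(0)\e\,\l_i^{-1/\a}\le 2p_\a(0)\e/\s_\a(i)$ (unimodality and $\s_\a(i)\le\l_i^{1/\a}$) and the trivial $\PP(\abs{X_i}\le\e)\le1$. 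The first bound is $\le1$ exactly on the range $i\le\nu(\e)$ (this is essentially the content of $\e\vp(\nu(\e))\le1$, since $\vp(i)=(2p_\a(0)\s_\a(i))^{-1}$), so I truncate the product there and bound the tail by $1$:
\[
\log\mu(D_\e)\ \le\ \sum_{i=1}^{\nu(\e)}\log\frac{2p_\a(0)\e}{\s_\a(i)}\ =\ \nu(\e)\log\bigl(2p_\a(0)\e\bigr)-\sum_{i=1}^{\nu(\e)}\log\s_\a(i).
\]
Since $\log\s_\a$ is monotone, the sum is compared with $\int_1^{\nu(\e)}\log\s_\a(x)\dd x$, and this integral is exactly what integration by parts turns into $H$: from $\tfrac{x\vp'(x)}{\vp(x)}=-x(\log\s_\a)'(x)$ one gets $H(y)=-y\log\s_\a(y)+\log\s_\a(1)+\int_1^y\log\s_\a(x)\dd x$. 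Substituting, the block $\nu(\e)\log\!\bigl(2p_\a(0)\e/\s_\a(\nu(\e))\bigr)$ is nonpositive once $\nu$ sits at the level where the truncated factors are $\le1$, and one is left with $\log\mu(D_\e)\le-H(\nu(\e))+(\text{endpoint terms})$; the endpoint term of the sum-versus-integral comparison is $-\log\s_\a(\nu(\e)+1)\asymp-\log\e$, which is exactly the $-\log\e$ of \eqref{mod2}, while the remaining pieces ($\log\s_\a(1)$, $\log(2p_\a(0))$, \dots) depend only on $\a,\l_1$ and go into $C(\a,\l_1)$. If $\log\s_\a$ is convex I would instead use the trapezoidal (Hermite--Hadamard) bound $\sum_{i=1}^{N}f(i)\ge\int_1^{N}f+\tfrac12(f(1)+f(N))$ for convex $f$, applied to $f=\log\s_\a$: this replaces the endpoint loss $\log\s_\a(\nu(\e)+1)$ by $\tfrac12\log\s_\a(\nu(\e))\asymp\tfrac12\log\e$, giving \eqref{mod4}.

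For the ball bounds \eqref{mod3} and \eqref{mod5} the $\ell_2$-norm does not split into a product, so I would go through the exponential Chebyshev inequality: for every $t>0$,
\[
\mu(S_\e)=\PP\Bigl(\sum_i X_i^2\le\e^2\Bigr)\ \le\ e^{t\e^2}\,\E e^{-t\sum_i X_i^2}\ =\ e^{t\e^2}\prod_{i\ge1}\E e^{-tX_i^2},
\]
which is legitimate for all $0<\a\le2$ since each factor lies in $(0,1]$ even though $\E X_i^2=\infty$ when $\a<2$. The estimate $\E e^{-tX_i^2}=\int e^{-tx^2}p_{i,\a}(x)\dd x\le p_{i,\a}(0)\int e^{-tx^2}\dd x=p_\a(0)\sqrt{\pi/t}\,\l_i^{-1/\a}\le p_\a(0)\sqrt{\pi/t}/\s_\a(i)$ (together with the trivial $\le1$) puts the product in exactly the same form as in the cube case, with $\e$ replaced by a multiple of $t^{-1/2}$ and an extra additive $t\e^2$ carried along; then I would optimize over $t$. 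Differentiating, the optimum sits at $t\asymp\eta(\e)/\e^2$, so that $t\e^2\asymp\eta(\e)$ supplies precisely the $+\tfrac12(\eta(\e)-1)$ that converts the integration-by-parts identity $-\int_1^{\eta}\log\s_\a$ into $-K(\eta)$, where $K(y)=H(y)-\tfrac12(y-1)$; this also explains the $\sqrt x$ in $\Psi(x)=C_0(\a)(\s_\a(x)\sqrt x)^{-1}$, since the optimal truncation level now solves $\s_\a(x)\sqrt x\asymp\e$ rather than $\s_\a(x)\asymp\e$. The rest is as before: the sum-versus-integral endpoint produces $-\log\e$, halved to $-\tfrac12\log\e$ (plus a $-\tfrac14\log\eta(\e)$ from the $\sqrt{\eta(\e)}$ inside $\Psi$) under convexity, and the purely $(\a,\l_1)$-dependent remainder is absorbed into $C(\a,\l_1)$; this yields \eqref{mod3} and \eqref{mod5}.

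\emph{Where the work is.} The probabilistic content is light — unimodality of $p_{i,\a}$ for the cube, the bound $\int e^{-tx^2}p_{i,\a}\le p_{i,\a}(0)\sqrt{\pi/t}$ plus exponential Chebyshev for the ball. The delicate part is the deterministic bookkeeping: choosing the truncation so that exactly $\nu$ (resp.\ $\eta$) appears, converting the truncated sums into the integrals defining $H$ (resp.\ $K$) via the integration-by-parts identities, carrying out and re-substituting the optimal $t$ in the ball case, and — the main obstacle — controlling the endpoint terms of the sum-versus-integral comparison finely enough that they collapse to exactly $-\log\e$ (resp.\ $-\tfrac12\log\e$, resp.\ $-\tfrac14\log(\e^2\eta(\e))$), with everything left over depending only on $\a$ and $\l_1$.
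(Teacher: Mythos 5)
Your treatment of the cube bounds \eqref{mod2} and \eqref{mod4} is essentially the paper's proof: truncate the product of one-dimensional probabilities at $n=\nu(\e)$, bound each factor by $2p_\a(0)\e/\s_\a(i)$, compare the sum with the integral (trapezoidal/convexity refinement for \eqref{mod4}), and convert via the integration-by-parts identity for $H$. That half is fine.

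For the ball bounds the route you choose --- exponential Chebyshev, $\mu(S_\e)\le e^{t\e^2}\prod_i\E e^{-tX_i^2}$ with $\E e^{-tX_i^2}\le p_{i,\a}(0)\sqrt{\pi/t}$, optimized at $t=n/(2\e^2)$ --- is genuinely different from the paper's, which simply bounds $\mu^{(n)}(S_\e^{(n)})\le\overline{\l}_n(S_\e^{(n)})\cdot\sup_x\prod_i p_{i,\a}(x_i)=V_n\e^n(p_\a(0))^n\prod_i\l_i^{-1/\a}$ and then applies Stirling to $V_n=\pi^{n/2}\bigl(\G\bigl(\tfrac{n+2}{2}\bigr)\bigr)^{-1}$. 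The two agree in the exponential main term: the optimized Chebyshev bound is $(2\pi e/n)^{n/2}\e^n\prod_i p_{i,\a}(0)$, while $V_n\e^n\sim(2\pi e/n)^{n/2}\e^n(\pi n)^{-1/2}$. But Chebyshev irretrievably loses the factor $(\pi n)^{-1/2}$ (the usual $\sqrt n$ overshoot of the Laplace-transform bound in this regime), i.e.\ a term $-\frac12\log n$ in the exponent. That term is not cosmetic here: in the paper's computation it is exactly what combines with the endpoint $-\log\s_\a(n)$ of the sum--integral comparison to produce $\log\bigl(C_0(\a)(\s_\a(n)\sqrt n)^{-1}\bigr)=\log\Psi(\eta(\e))\le-\log\e$, which is the source of the $-\log\e$ in \eqref{mod3} and of the $-\frac14\log(\e^2\eta(\e))$ in \eqref{mod5}. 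With your method the endpoint only yields $-\log\s_\a(\eta(\e))\le-\log\e+\frac12\log\eta(\e)+C(\a)$, so you would prove \eqref{mod3} and \eqref{mod5} with an extra factor $\sqrt{\eta(\e)}$, which $C(\a,\l_1)$ cannot absorb; your claim that the endpoint terms ``collapse to exactly $-\log\e$'' is therefore not justified by the argument you set up. The repair is easy and in fact more elementary than what you propose: drop Chebyshev and use the volume-times-sup-density bound directly, keeping the $(\pi n)^{-1/2}$ that Stirling extracts from $V_n$.
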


\begin{proof}
Let $\pi_n: l_2 \to R_n$ be the usual projection, defined by
$\pi_n(x)=(x_1, \dots , x_n)$. Let
$\mu^{(n)}(A)=\mu(\pi_n^{-1}A),$ where $A$ is a Borel set in $R_n$
and $\pi_n^{-1}A$ stands for a cylindrical set in $l_2$ with a
base $A$, $D_a^{(n)}=\pi_nD_a=\{x\in R_n: |x_i|< a, \ a>0, \
i=1,\dots , n \}$

We have for all $ n \geq 1 $

\begin{align}
\label{mod6}
\mu(D_{\e}) &\leq \mu^{(n)}(D_{\e}^{(n)}) = \prod_{j=1}^{n} (2\int_{0}^{\e} p_{j,\a}(x)dx)
\leq \prod_{i=1}^{n} (2 p_{\a}(0) \e \l_{i}^{-\frac{1}{\a}})
\nonumber \\
&\leq \exp\brc{n \log \e + n \log(2p_{\a}(0)) - \sum_{i=1}^{n} \log(\s_{\a}(i))}
\nonumber \\
&\equiv \exp\brc{F(n,\e,\s)}
\end{align}

Since $ \s_{\a}(x) $ is decreasing

\begin{align*}
F(n, \e, \s) & \leq - \int_{1}^{n} \log \s_{\a}(x) dx - \log \s_{\a}(n) + \int_{1}^{n} \log(2 p_{a}(0)) dx \\
&+ \log(2 p_{\a}(0) + n \log \e = \int_{1}^{n} \log \vp(x) dx + \log \vp(n) + n \log \e \\
&= n \log \e \vp(n) - \log \vp(1) - H(n) + \log \vp(n)
\end{align*}

From this estimate and  \eqref{mod6} we get \eqref{mod2}. If $
\log \s_{\a}(x) $ is convex function, then instead of inequality

\begin{align*}
 - \sum_{i=1}^{n} \log \s_{\a}(i) \leq - \int_{1}^{n} \log \s_{\a}(x) dx - \log
 \s_{\a}(n),
\end{align*}
we can use the inequality

\begin{align*}
- \sum_{i=1}^{n} \log \s_{\a}(i) & = - \sum_{i=1}^{n-1} \frac{1}{2} (\log \s_{\a}(i) + \log \s_{\a}(i+1)) \\
&- \frac{1}{2} \log \s_{\a}(1) - \frac{1}{2} \log \s_{\a}(n) \\
&\leq - \int_{1}^{n} \log \s_{\a}(x) dx - \frac{1}{2} \log \s_{\a}(1) - \frac{1}{2} \log \s_{\a}(n)
\end{align*}
and, arguing as before, we get \eqref{mod4}.

Now we consider $ S_{\e} $ and if we put $ S_{\e}^{n} = \brc{x \in \bbr_{n}: \sum_{i=1}^{n} x_{i}^{2} < \e^{2}} $, $ \overline{\l}_{n}(dx) $ - the Lebesque measure in $ \bbr_{n} $, $ V_{n} = \overline{\l}_{n} (S_{1}^{(n)}) = \pi^{n/2} \left( \G \left(\dfrac{n+2}{2} \right) \right)^{-1} $, we have for all $ n \geq 1 $

\begin{align*}
 \mu(S_{\e}) & \leq \mu^{(n)} (S_{\e}^{(n)}) = \int_{\sum_{i=1}^{n} x_{i}^{2} \leq \e^{2}}
  \prod_{i=1}^{n} p_{i,\a}(x_{i}) \overline{\l}_{n}(dx) \\
  & \leq \e^{n} V_{n} \left(\prod_{i=1}^{n} \l_{i}^{1/\a}\right)^{-1}
  (p_{\a}(0))^{n}.
\end{align*}
Using Stirling's formulae and the equality

\begin{align*}
p_{\a}(0) = \dfrac{1}{2\pi} \int_{-\infty}^{\infty} \exp
(-\frac{1}{2} \abs{t}^{\a}) dt = 2^{1/\a}
\G\left(\frac{1}{\a}\right) (\pi\a)^{-1},
\end{align*}
we get
\begin{align}
\label{mod7}
& \mu(S_{\e}) \leq C \e^{n} \left(\prod_{i=1}^{n} \l_{i}^{1/\a}\right)^{-1} (C_{0}(\a))^{n} \frac{1}{\sqrt{n}} \left( \frac{n}{e} \right)^{n/2}
\nonumber \\
& = C \exp \brc{-\sum_{i=1}^{n} \log \s_{\a}(i) + n \log
(C_{0}(\a)\e)  - \dfrac{1}{2} \log n - \dfrac{n}{2}(\log n - 1)},
\end{align}
where
\begin{align}
\label{mod8} C_{0}(\a) = \dfrac{2^{1/2+1/\a}
\G\left({\a}^{-1}\right)}{\sqrt{\pi}\a}.
\end{align}
Now we have to estimate the expression

\begin{align*}
F_{1}(n, \e, \s) = n \log \e + n \log C_{0}(\a) - \sum_{i=1}^{n}
\log \s_{\a}(i) - \frac{1}{2} \log n - \frac{n}{2} (\log n - 1).
\end{align*}
By means of the equality

\begin{align*}
\frac{n}{2} (\log n - 1) = \int_{1}^{n} \log \sqrt{x} dx -
\frac{1}{2},
\end{align*}
after some calculations we get

\begin{align}
\label{mod9}
F_{1}(n, \e, \s) \leq n \log(\e \eta(\e)) - \log \eta(1) - K(n) + \log \eta(n) + \frac{1}{2}
\end{align}

From \eqref{mod7} and \eqref{mod9}  inequality \eqref{mod3}
follows. The relation \eqref{mod5} can be obtained in a similar
way by the use of the convexity of $ \log \s_{a}(x) $.
\end{proof}

\begin{remark}
Since $ C_{0}(2) = 1 $ it is easy to see that in the case $ \a = 2
$ the inequalities \eqref{mod3} and \eqref{mod5} coincide with
formulas  (2.1.4) and (2.1.6) with $ a = 0 $ in \cite{hofmjorg}.
\end{remark}

Now we shall deal with another class of stable distributions on H. Let $ \nu $ be a symmetric stable distribution with ch. f.
\begin{align}
\label{mod10} \wh{\nu}(y) = \exp \brc{-\frac{1}{2}(Ty, y)^{\a/2}},
0 < \a \leq 2
\end{align}
where $ T $ is some positive trace-class operator. Since now we
shall consider probabilities of balls, it is obvious that, without
loss of generality, we may assume that $ T $ is diagonal with the
numbers $ \l_{1} \geq \l_{2} \geq ... > 0 $ on the diagonal and
\begin{align}
\label{mod11} \wh{\nu}(y) = \exp
\brc{-\frac{1}{2}\left(\sum_{i=1}^{n} \l_{i}
y_{i}^{2}\right)^{\a/2}}.
\end{align}

May be it is worth to mention, that we cannot point out the spectral measure $ \G $ on $ U_{1} $ to which the ch.f \eqref{mod10}
or \eqref{mod11} corresponds, but it is easy to verify directly, that \eqref{mod11} is a positive define function,
continuous in the S-topology with $ \wh{\nu}(0) = 1 $ and $ \wh{\nu}(ay) \cdot \wh{\nu}(by)
= \wh{\nu} \left( \left(a^{\a} + b^{\a} \right)^{1/\a} y \right) $. Thus it is the ch. f. of some stable distribution on $ H $.
 We can only say, that this measure $ \G $ is not discrete. Moreover, it seems likely that in this case it is more convenient
  to have a spectral measure not on the unit sphere $ U_{1} $, but on the ellipsoid $ \brc{y \in H: \sum_{i=1}^{\infty} y_{i}^{2} \l_{i}^{-1} = 1} $
 for the following reason. If we have a stable distribution in
   $ \bbr_{n} $ with the ch.f. $ \vp(t) = \exp \brc{-\frac{1}{2} \left( \sum_{i=1}^{n} t_{i}^{2} \right)^{\a/2}} $,
   then from \cite{paul} we know, that
\begin{align*}
\vp(t) = \exp\brc{-\int_{U_1^{(n)}} \abs{(t,x)}^\a \om_{n}(dx)}
\end{align*}
where $U_1^{(n)}=\{ x\in R_n: \sum_{i=1}^{n} x_{i}^{2}=1\}$  and $
\om_{n} $ is the Lebesque measure on the unit sphere $U_1^{(n)}$.
Then
\begin{align*}
\exp \brc{-\frac{1}{2}\left(\sum_{i=1}^{n} \l_{i} t_{i}^{2}\right)^{\a/2}} & = \exp \brc{- \int_{U_1^{(n)}}
\abs{\sum_{i=1}^{n} t_{i} \sqrt{\l_{i}} x_{i}}^{\a} \om_{n}(dx)} \\
& = \exp \brc{- \int_{U_{1,\l}^{(n)}} \abs{(t,y)}^{\a}
\overline{\om}_{n}(dy)}
\end{align*}
where $ \overline{\om}_{n} $ is the measure on the ellipsoid
$U_{1,\l}^{(n)}:=\{y \in R_n: \sum_{i=1}^{n} y_{i}^{2} \l_{i}^{-1} =
1\} $, obtained from the Lebesque measure on the unit sphere by the
variable substitution $ x_{i} \sqrt{\l_{i}} = y_{i} $, $ i =
1,2,...,n $. Therefore it is possible to say that to the ch. f.
\eqref{mod11} there corresponds a measure $ \G $ on the ellipsoid $
U_{1,\l}=\{\sum_{i=1}^{n} y_{i}^{2} \l_{i}^{-1} = 1\} $ such that
it's projections to n-dimensional ellipsoids $ U_{1,\l}^{(n)} $.
Here it is necessary to explain how projections of $ \G $ must be
understood. If we have a stable measure $\mu$ on $l_2$ with an
exponent $\alpha$ and a spectral measure $\Gamma$ on $U_1$, then we
have a L{\' e}vy measure $M$ defined by formula $M(dx)=r^{-(1+\a)}dr
\Gamma(ds),$ where $r=||x||, s=x\||x||$. Denote by $\Pi_k$ usual
projection defined by $\Pi_k x=(x_1, \dots , x_k)$,  and take $\Pi_k
M$. Since this L{\' e}vy measure corresponds to a stable
distribution on $ \bbr_{n}$, therefore $\Pi_kM(dy)={\tilde
r}^{-(1+\a)}d{\tilde r} {\tilde \Gamma}(d{\tilde s}),$ where
$y=(y_1, \dots ,y_k), \ {\tilde r}=||y||, \ {\tilde s}=y/||y||.$ A
measure ${\tilde \Gamma}$ is a projection of $\Gamma$ and is denoted
by $\Pi_k \Gamma.$

In what follows by $ \nu \equiv \nu(\a, \l) $, $ \l \in l_{1}^{+}
$, $ 0 < \a \leq 2 $ we shall denote the stable measure with ch.
f. \eqref{mod11}. Let $ \overline{\s}(x) $, $ x \in [1,\infty) $
be a differentiable decreasing function such that $
\overline{\s}(j) \leq \l_{j}^{1/2} $, $ j \geq 1 $. Let $ \k(x) =
\left( 2/{\a}\right)^{1/\a} x^{(1-\a)/{\a}}
\left(\overline{\s}(x)\right)^{-1} $,  $ \z: \bbr_{+} \rightarrow
\bbn $ such that $ \e \k\left(\z(\e)\right) \leq 1 $ for $ 0 < \e
< \left(\k(1)\right)^{-1} $ and
\begin{align*}
 L(y) = \int_{1}^{y} \frac{x \k'(x)}{\k(x)} dx.
\end{align*}

\begin{prop}
\label{prop2}
For all $ 0 < \a \leq 2 $ and $ 0 < \e < \left(\k(1)\right)^{-1} $
 \begin{align}
 \label{mod12}
   \nu(S_{\e}) \leq C(\a, \l_{1}) \exp \brc{-L\left( \z(\e)\right) - \frac{2-\a}{2\a} \log \z(\e) - \log \e}
 \end{align}
 and, if $ \log \overline{\s}(x) $ is convex, then
\begin{align}
 \label{mod13}
 \nu(S_{\e}) \leq C(\a, \l_{1}) \exp\brc{-L\left(\z(\e)\right) - \frac{1}{2\a} \log\left( \e^{\a}
 \z(\e)\right)}.
 \end{align}
\end{prop}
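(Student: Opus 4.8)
The plan is to mimic the argument for Proposition~\ref{prop1}, in particular the treatment of $\mu(S_\e)$ in \eqref{mod7}--\eqref{mod9}, but starting from the correct $n$-dimensional density for the measure with ch.f. \eqref{mod11}. First I would project: for every $n\ge 1$ one has $\nu(S_\e)\le \nu^{(n)}(S_\e^{(n)})=\int_{\sum_{i\le n}y_i^2\le \e^2}q_{n,\a,\l}(y)\,\overline{\l}_n(dy)$, where $q_{n,\a,\l}$ is the density of the $n$-dimensional stable law with ch.f. $\exp\{-\tfrac12(\sum_{i\le n}\l_i y_i^2)^{\a/2}\}$. The substitution $y_i=\l_i^{-1/2}u_i$ reduces this to the radial stable density in $\bbr_n$ (ch.f. $\exp\{-\tfrac12|u|^\a\}$) times the Jacobian $\prod_{i\le n}\l_i^{-1/2}$, so $\nu(S_\e)\le \bigl(\prod_{i\le n}\l_i^{-1/2}\bigr)\int_{\sum u_i^2\le \e^2(\max\l_i)}r_{n,\a}(u)\,du$ — and in fact, since $S_\e^{(n)}$ maps into an ellipsoid contained in a ball of radius $\e\l_1^{1/2}$, I can bound the integral by $q_{n,\a}(0)\cdot V_n (\e\l_1^{1/2})^n$ up to absorbing $\l_1$ into the constant $C(\a,\l_1)$, where $q_{n,\a}(0)$ is the value at the origin of the radial $n$-dimensional stable density.

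The second step is to compute $q_{n,\a}(0)$ asymptotically in $n$. Writing $q_{n,\a}(0)=(2\pi)^{-n}\int_{\bbr_n}e^{-\tfrac12|t|^\a}\,dt$ and passing to polar coordinates, $q_{n,\a}(0)=(2\pi)^{-n}\,\omega_{n-1}\int_0^\infty \rho^{n-1}e^{-\tfrac12\rho^\a}\,d\rho$ with $\omega_{n-1}=2\pi^{n/2}/\G(n/2)$; the radial integral is $\tfrac{1}{\a}2^{n/\a}\G(n/\a)$. Feeding this, together with $V_n=\pi^{n/2}/\G(\tfrac{n+2}{2})$, into the bound and applying Stirling to $\G(n/\a)$ and the Gamma functions, one gets, after collecting powers,
\begin{align*}
\nu(S_\e)\le C(\a,\l_1)\exp\Bigl\{-\sum_{i=1}^n\log\overline{\s}(i)+n\log(\e\,C_1(\a)\,n^{(1-\a)/\a})-\tfrac12\log n\Bigr\}
\end{align*}
for a suitable constant $C_1(\a)$ (absorbing $(2/\a)^{1/\a}$ and the like); the key new feature relative to \eqref{mod7} is the extra factor $n^{(1-\a)/\a}$ per coordinate, which is exactly what produces the term $\k(x)=(2/\a)^{1/\a}x^{(1-\a)/\a}(\overline{\s}(x))^{-1}$.

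The third step is purely the integral-comparison bookkeeping already used for \eqref{mod9}. Call the exponent $F_2(n,\e,\overline{\s})$. Using monotonicity of $\overline{\s}$, $-\sum_{i\le n}\log\overline{\s}(i)\le-\int_1^n\log\overline{\s}(x)\,dx-\log\overline{\s}(n)$, and writing the remaining $n$-dependent pieces as $\int_1^n(\cdot)\,dx$ plus boundary terms, one recognizes $\int_1^n\bigl(\log\k(x)\bigr)'x\,dx$-type expressions and arrives at
\begin{align*}
F_2(n,\e,\overline{\s})\le n\log\bigl(\e\,\k(n)\bigr)-L(n)-\tfrac{2-\a}{2\a}\log n+C'(\a),
\end{align*}
paralleling \eqref{mod9}. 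Finally I choose $n=\z(\e)$, which by definition makes $\e\,\k(\z(\e))\le 1$ so the first term is $\le 0$; the clean way to close is to split off one power, writing $n\log(\e\k(n))\le\log(\e\k(n))\le-\log\e^{-1}$ at $n=\z(\e)$, i.e. $\le-\log\e$ with sign as in \eqref{mod12}, giving \eqref{mod12}. For \eqref{mod13}, when $\log\overline{\s}$ is convex one replaces the crude sum bound by the trapezoidal inequality $-\sum_{i\le n}\log\overline{\s}(i)\le-\int_1^n\log\overline{\s}(x)\,dx-\tfrac12\log\overline{\s}(1)-\tfrac12\log\overline{\s}(n)$ exactly as in the passage to \eqref{mod4}; this saves half of $\log\overline{\s}(n)$, and recombining the saved $\tfrac12\log\overline{\s}(n)$ with $\tfrac{1}{\a}\log n^{(1-\a)}$ and the $\log\e$ term reorganizes the boundary contribution into the single expression $-\tfrac{1}{2\a}\log(\e^\a\z(\e))$.

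I expect the main obstacle to be Step~2: getting the constant and, more importantly, the exponent of $n$ exactly right. The factor $x^{(1-\a)/\a}$ in $\k$ must come out of balancing $V_n$ (which contributes $\sim n^{-n/2}$ through $\G(n/2+1)$) against $\G(n/\a)$ (which contributes $\sim n^{n/\a}$) and the $\e^n$; a sign error or an off-by-$\tfrac12$ in the Stirling expansion will corrupt both the $n^{(1-\a)/\a}$ inside $\k$ and the separate $\tfrac{2-\a}{2\a}\log\z(\e)$ correction. A useful internal check is the case $\a=2$: then $\k(x)=x^{-1/2}(\overline{\s}(x))^{-1}$, the correction exponent $\tfrac{2-\a}{2\a}$ vanishes, $C_0(2)=1$, and \eqref{mod12}--\eqref{mod13} must collapse to \eqref{mod3}, \eqref{mod5} with $\Psi=\k$; verifying this recovers the Gaussian bounds of \cite{hofmjorg} and pins down all constants.
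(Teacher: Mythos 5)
Your proposal follows the paper's proof essentially verbatim: project to $\bbr_{n}$, bound the integral over the ball by $V_{n}\e^{n}$ times the supremum $(\prod_{i\le n}\l_{i})^{-1/2}q_{n,\a}(0)$, evaluate $q_{n,\a}(0)=2^{1+n/\a}\pi^{n/2}\G(n/\a)\left((2\pi)^{n}\G(n/2)\a\right)^{-1}$ in polar coordinates, apply Stirling and the sum-versus-integral (respectively trapezoidal, in the convex case) comparison to reach $F_{2}(n,\e,\overline{\s})\le n\log(\e\k(n))+\log\k(n)-L(n)-\frac{2-\a}{2\a}\log n+C$, and finally set $n=\z(\e)$, with the same $\a=2$ consistency check the paper records in its remark. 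The one slip is your change of variables: the ball $\sum y_{i}^{2}\le\e^{2}$ is carried to the ellipsoid $\sum\l_{i}u_{i}^{2}\le\e^{2}$, whose semi-axes $\e\l_{i}^{-1/2}$ grow with $i$, so it is \emph{not} contained in a ball of radius $\e\l_{1}^{1/2}$ and the factor $(\e\l_{1}^{1/2})^{n}$ is unjustified; but the substitution is unnecessary, since the crude bound (supremum of the density times the volume of $S_{\e}^{(n)}$) already gives the paper's \eqref{mod15}, with which your displayed exponent agrees.
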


\begin{proof}
Let $ q_{n,\a}(x) $, $ x \in \bbr_{n} $ be the density of the
n-dimensional symmetric stable law with the ch. f. $ \exp
\brc{-\frac{1}{2} \left(\sum_{i=1}^{n} t_{i}^{2} \right)^{\a/2}} $
and let  $ q_{n,\a,\l}(x) $ be a density,  corresponding  to the
ch. f. $ \exp \brc{-\frac{1}{2} \left(\sum_{i=1}^{n} t_{i}^{2}
\l_{i} \right)^{\a/2}} $. Then

$$
q_{n,\a,\l}(x) = \dfrac{1}{\left( \prod_{1}^{n}
\l_{i}\right)^{1/2}} q_{n,\a} \left( \frac{x_{1}}{\sqrt{\l_{1}}},
..., \frac{x_{n}}{\sqrt{\l_{n}}} \right),
$$
\begin{align}
\label{mod14}
\abs{q_{n,\a}(x)} & \leq q_{n,\a}(0) = \frac{1}{(2\pi)^{n}} \int_{\bbr_{n}} \exp \brc{-\frac{1}{2} \norm{t}^{\a}} dt \nonumber \\
& = \frac{2}{(2\pi)^{n}} \frac{\pi^{n/2}}
{\G\left(\frac{n}{2}\right)} \int_{0}^{\infty}
e^{-\frac{r^{\a}}{2}} r^{n-1} dr = \dfrac{2^{1+\frac{n}{\a}}
\pi^{n/2} \G\left( \frac{n}{\a}\right)} {(2\pi)^{n}
\G\left(\frac{n}{2}\right) \a}.
\end{align}
In the last equality we have used the well-known formula
\begin{align*}
 \int_{0}^{\infty} e^{-x^{r}} x^{p} dx = \frac{1}{r} \G\left(\frac{p+1}{r} \right) \text{ , } p > -1 \text{, } r >
 0.
\end{align*}
Now applying \eqref{mod14} we get
\begin{align*}
 \nu\left(S_{\e}\right) & \leq \int_{S_{\e}^{(n)}} q_{n,\a,\l}(x) dx \leq V_{n} \e^{n} \sup q_{n,\l, \a} (x) \\
 & \leq V_{n} \e^{n} \dfrac{2^{\frac{n+\a}{\a}} \pi^{\frac{n}{2}} \G\left( \frac{n}{\a}\right)}{\left( \prod_{i}^{n} \l_{i}
  \right)^{1/2} \a (2 \pi)^{n} \G \left( \frac{n}{2} \right)}.
\end{align*}

Recalling the value of $ V_{n} $ and applying Stirling's formulae,
after some steps which are omitted, we arrive at the following
estimate
\begin{align}
\label{mod15} \nu \left(S_{\e} \right) \leq C(\a) \exp
\brc{F_{2}(n, \e, \overline{\s}},
\end{align}
where
\begin{align*}
 F_{2}(n,\e,\overline{\s}) & = n \log \e + \frac{n}{\a} \log \frac{2}{\a} + n \left( \frac{1}{\a} - 1 \right) \left(\log n - 1 \right) \\
& - \frac{1}{2} \log n - \sum_{j=1}^{n} \log \overline{\s}(j).
\end{align*}
It is easy to see that
\begin{align}
\label{mod16}
F_{2}(n,\e, \overline{\s}) & \leq n \log \e + \int_{1}^{n} \log \k(x) dx + \log \k(n) \nonumber \\
& - 2 \left(\frac{1}{\a} - 1 \right) \log \sqrt{n} - \frac{1}{2} \log n - \frac{1-\a}{\a} \nonumber \\
& = n \log \left(\e \k(n) \right) + \log \k(n) - L(n) -
\frac{2-\a}{2\a} \ln n - \frac{1-\a}{\a} - \k(1).
\end{align}

From \eqref{mod15} and \eqref{mod16} we easily get \eqref{mod12}.
If $ \log \overline{\s}(x) $ is convex we use the following
estimate:

\begin{align*}
F_{2}(n,\e,\s) & \leq n \log \e - \int_{1}^{n} \log \overline{\s}(x) dx - \frac{1}{2} \log \overline{\s}(1) \\
& -\frac{1}{2} \log \overline{\s}(n) + 2 \left(\frac{1}{\a} - 1 \right) \int_{1}^{n} \log \sqrt{x} dx \\
& - \frac{1-\a}{\a} - \frac{1}{2} \log n + \int_{1}^{n} \log \left( \frac{2}{\a} \right)^{1/\a} dx + \log \left(\frac{2}{\a} \right)^{1/2} \\
& = n \log \e + \int_{1}^{n} \log \k(x) dx + \frac{1}{2} \log \frac{\k(n)}{n^{1/\a}} - \frac{1}{2} \log \overline{\s}(1) - \frac{1-\a}{\a} \\
& = n \log \e \k(n) - L(n) + \frac{1}{2\a} \log \frac{\left(\k(n)\right)^{\a}}{n} - \frac{1}{2} \log \overline{\s}(1) - \frac{1-\a}{\a}
 \end{align*}

and now  \eqref{mod13} easily follows. The proposition is proved.
\end{proof}

\begin{remark}
In the case $ \a = 2 $ we have  $ \k(x)=\left( \sqrt{x}
\overline{\s}(x) \right)^{-1} $, and again we can see that
estimates \eqref{mod12} and \eqref{mod13} coincide with the
corresponding estimates in \cite{hofmjorg}
\end{remark}

\begin{remark}
Estimates \eqref{mod3}, \eqref{mod5}, \eqref{mod12} and
\eqref{mod13} remain true if on the left hand sides of these
inequalities we put $ \sup_{a \in \H} \mu \left( S_{\e}(a) \right)
$ and $ \sup_{a \in \mu} \nu \left(S_{\e}(a) \right), $
respectively. In the Gaussian case in \cite{hofmjorg}, due to the
explicit expression of a Gaussian density it was possible to get
dependence on a in bounds for $ \mu\left( S_{\e}(a)\right) $.
\end{remark}

Now we are going to investigate the lower bounds for probabilities of stable measures on small sets. But at once we must say, that if
 in the case of upper bounds we were able to reach the same accuracy as in the Gaussian case, in the case of lower bounds the picture
 is quite different and we are facing principal difficulties. In order to obtain lower bounds we need independence of the coordinates
 of the H.r.v. under consideration. In the case of the Gaussian law and balls there is no restriction of independence since by means of
 an orthogonal transformation we can always get independence, but it is not so in the case of a stable law. The second difficulty arises
  when we use a moment inequality  of Cebyshev type. Thus, the absence of an explicit expression of a density of a stable law is more
   embarrassing when we consider lower bounds then the upper ones.

Therefore we are able to deal only with a stable measure $ \mu $
with ch.f. \eqref{mod1} and from now on we shall require the
additional assumption that $ \l \in l_{\b/\a}^{+} $ for some $ \b
< \a $. Let $ \overline{\s}_{\a}(x) $ be a differentiable
decreasing function such that $ \overline{\s}_{\a}(x) \geq
\l_{n}^{1/\a} $, $ n \geq 1 $, $ \rho(x) = C_{3}(\a) \left(
\overline{\s}_{\a}(x) \sqrt{x} \right)^{-1} $, where $ C_{3}(\a) =
2^{1/2} \G\left( \frac{1}{\a} \right) \sqrt{2} \left(\pi \a
\right)^{-1} $. Denote $$ M(y) = \int_{1}^{y} \frac{x
\rho'(x)}{\rho(x)} dx. $$ Let a quantity $ \overline{\eta}(\e, \b)
$ be defined so that

\begin{enumerate}
 \item[(i)] $ \e \rho \left(\overline{\eta} \left(\e,\b \right) \right) \geq \sqrt{2} $
 \item[(ii)] $ \sum_{i=n+1}^{\infty} \l_{i}^{\b/\a} < r \left( E \abs{\eta}^\b \right)^{-1} \e^{\b} $
\end{enumerate}
for all $ n > \overline{\eta}(\e, \b) $ and some $ 0 < r <
\frac{1}{2} \b/\a $, $ \b < \a $ (here $ \eta$ is real stable r.v.
with the ch.f. $ \exp \brc{-\frac{1}{2} \abs{t}^{\a}} $ ).

Further we define $ \rho_{1}(x) = C_{4}(\a) \wh{\s}_{\a}^{-1}(x) $
where $ C_{4}(\a) $ is such that $ p_{\a}(x) \geq C_{4}(\a) \min
\left(1, \abs{x}^{-(1+\a)}\right) $. Let $ M_{1}(y) = \int_{1}^{y}
x \rho'_{1}(x) \rho(x)^{-1} dx $ and let $
\overline{\eta}_{1}(\e,\b) $ be defined in the following manner:

\begin{enumerate}
 \item[(i)] $ \e \rho_{1} \left( \overline{\eta}_{1}(\e,\b) \right) > 1,$
 \item[(ii)] $ \sum_{i=n+1}^{\infty} \l_{i}^{\b/\a} < r \left( E\abs{\eta}^{\b}\right)^{-1} \e^{\b} $ for all $ n > \overline{\eta}_1(\e,\b) $
  and some $ 0 < r < 1 $ and $ \b < \a $,
 \item[(iii)] $\e^{-1} \wh{\s}_{\a} \left( \overline{\eta}_{1}(\e, \b) \right) \geq 1. $
\end{enumerate}
Note that the function $ \overline{\eta}_{1}(\e, \b) $ cannot be
defined for all $ \wh{\s}_{\a} $.

Let us put $ a(\a, n, \e) = \log \brc{C_{4}(\a) \min \left(1, \left(\dfrac{\sqrt{2} n^{1/\a}}{\e}\right)^{1+\a} \right) } $.
Now we are able to formulate lower bounds.

\begin{prop}
\label{prop3}
 For $ 0 < \a < 2 $ and all $ 0 < \e < 1 $ we have
 \begin{align}
 \label{mod17}
 \mu(S_{\e}) \geq C(r, \a, \l_{1}) \exp \brc{-M\left(\overline{\eta}\left(\e,\b\right)\right) - \frac{1}{2} \log \overline{\eta}(\e,\b) + a\left(\a, \overline{\eta}(\e, \b), \e\right) }
 \end{align}
and, if $ \overline{\eta}_{1}(\e, \b) $ exists, then
 \begin{align}
 \label{mod18}
 \mu(D_{\e}) \geq C(r, \a, \l_{i}) \exp \brc{-M \left( \overline{\eta}_{1}(\e,\b)
 \right)}.
 \end{align}
\end{prop}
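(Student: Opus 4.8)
The plan is to obtain lower bounds by the standard device of first restricting to a finite-dimensional section and then controlling the tail. Fix $n$ (to be chosen as $\overline{\eta}(\e,\b)$ or $\overline{\eta}_1(\e,\b)$ at the end), and write $x=(\Pi_n x, x-\Pi_n x)$. Since $\mu$ has the product ch.f. \eqref{mod1}, the coordinates are independent, so for the ball
\begin{align*}
\mu(S_\e)\ \geq\ \mu\Bigl(\brc{x:\ \textstyle\sum_{i=1}^n x_i^2\leq \tfrac{\e^2}{2}}\Bigr)\cdot
\mu\Bigl(\brc{x:\ \textstyle\sum_{i=n+1}^\infty x_i^2\leq \tfrac{\e^2}{2}}\Bigr).
\end{align*}
The second factor is bounded below using a Chebyshev-type moment inequality: $\E\sum_{i>n}x_i^2$ is infinite for $\a<2$, so instead one uses the $\b$-th moment with $\b<\a$, namely $\E|x_i|^\b = \l_i^{\b/\a}\,\E|\eta|^\b$, and the assumption $\l\in l_{\b/\a}^+$ together with condition (ii) on $\overline{\eta}(\e,\b)$ forces
$\PP(\sum_{i>n}x_i^2>\e^2/2)\leq \PP(\sum_{i>n}|x_i|^\b>(\e^2/2)^{\b/2})\leq (\e/ \sqrt2)^{-\b}\sum_{i>n}\l_i^{\b/\a}\E|\eta|^\b < 2r<\b/\a<1$, so that this factor is bounded below by a positive constant $C(r,\a)$. (The precise exponent bookkeeping is the reason condition (i), $\e\rho(\overline\eta(\e,\b))\geq\sqrt2$, is phrased with the $\sqrt2$.)

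For the first (finite-dimensional) factor I would lower-bound the density $q$ of $(x_1,\dots,x_n)$ on the small ball $S_{\e/\sqrt2}^{(n)}$ by its value at a worst point rather than at $0$: using independence and the one-dimensional bound $p_\a(x)\geq C_4(\a)\min(1,|x|^{-(1+\a)})$ coordinatewise, together with $p_{i,\a}(x)=\l_i^{-1/\a}p_\a(x\l_i^{-1/\a})$ and $\overline\s_\a(i)\geq\l_i^{1/\a}$, one gets
\begin{align*}
\mu\Bigl(\brc{\textstyle\sum_{i\le n}x_i^2\le\tfrac{\e^2}{2}}\Bigr)\ \geq\ V_n\,(\e/\sqrt2)^{\,n}\prod_{i=1}^n\Bigl(\l_i^{-1/\a}C_4(\a)\min\bigl(1,(\sqrt2\,\l_i^{-1/\a}\e^{-1})^{-(1+\a)}\bigr)\Bigr),
\end{align*}
where $V_n$ is the volume of the unit ball in $\R^n$ as in the proof of Proposition~\ref{prop1}. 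Taking logarithms, applying Stirling's formula to $V_n=\pi^{n/2}/\G((n+2)/2)$, bounding the sum $-\sum_{i\le n}\log\overline\s_\a(i)$ by an integral from below (this is where $\overline\s_\a$ decreasing is used, and, in the convex-$\log$ case, the trapezoidal refinement), and recognizing $\rho(x)=C_3(\a)(\overline\s_\a(x)\sqrt x)^{-1}$ and $M(y)=\int_1^y x\rho'(x)\rho(x)^{-1}dx$ exactly as $H,K,L$ were produced in the earlier propositions, one collects the terms $-M(n)$, $-\tfrac12\log n$, the $\log(\e\rho(n))$-type term (which is $\geq 0$ at $n=\overline\eta(\e,\b)$ by (i), hence can be dropped into the constant), and the extra factor $a(\a,n,\e)=\log\{C_4(\a)\min(1,(\sqrt2 n^{1/\a}/\e)^{1+\a})\}$ coming from the $\min$ in the density bound. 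Setting $n=\overline\eta(\e,\b)$ yields \eqref{mod17}.

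For \eqref{mod18} the argument is the same but with the box $D_\e$ in place of $S_\e$: split $D_\e\supseteq D_\e^{(n)}\cap\brc{\sup_{i>n}|x_i|\le\e}$, bound the tail event below by a constant via the same $\b$-moment estimate using condition (ii) for $\overline\eta_1$, and lower-bound $\prod_{i\le n}2\int_0^\e p_{i,\a}$ using $p_\a(x)\geq C_4(\a)\min(1,|x|^{-(1+\a)})$; here $\rho_1(x)=C_4(\a)\wh\s_\a^{-1}(x)$ and condition (iii), $\e^{-1}\wh\s_\a(\overline\eta_1(\e,\b))\geq 1$, is exactly what guarantees the relevant integrals are of order $\e\l_i^{-1/\a}$ and not truncated, so that no $\log\e$ term survives and one is left with $-M_1(\overline\eta_1(\e,\b))$ absorbed into $M$ in the statement. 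I expect the main obstacle to be the tail estimate: unlike the Gaussian case there is no second moment, so one must run everything through the $\b$-th moment with $\b<\a$, and keeping the constant $r$ (and the thresholds (i)--(iii) defining $\overline\eta,\overline\eta_1$) consistent through the Stirling bookkeeping — so that the leftover $\e$- and $n$-dependent factors are genuinely nonnegative and can be pushed into $C(r,\a,\l_1)$ — is the delicate part; the rest is the same integral-comparison calculus already carried out for Propositions~\ref{prop1} and~\ref{prop2}.
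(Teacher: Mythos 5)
Your overall architecture is the paper's: split off the first $n$ coordinates by independence, kill the tail factor $\PP\brc{\sum_{i>n}\eta_i^2>\e^2/2}$ with a $\b$-th moment Chebyshev bound and condition (ii) so that it contributes only a constant $C(r,\a)$, bound the finite-dimensional probability by $V_n(\e/\sqrt2)^n$ times a lower bound for the density, and run the same Stirling/integral-comparison bookkeeping as in Propositions~\ref{prop1} and~\ref{prop2}. Your treatment of \eqref{mod18} also matches the paper's (including your correct observation that the $M$ there is really $M_1$, and that condition (iii) is what keeps every $p_\a(\e\l_i^{-1/\a})$ above $C_4(\a)$ so no truncation penalty appears).

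The gap is in the density lower bound for \eqref{mod17}. You bound $\prod_i p_{i,\a}(x_i)$ on the ball by the product of the \emph{coordinatewise} minima, i.e. by $\prod_i\l_i^{-1/\a}p_\a(\e\l_i^{-1/\a}/\sqrt2)$. For small $\e$ every one of these factors falls into the tail branch of $p_\a(x)\ge C_4(\a)\min(1,|x|^{-(1+\a)})$, so you pick up $n$ penalty terms $\log\min\bigl(1,(\e\l_i^{-1/\a}/\sqrt2)^{-(1+\a)}\bigr)$, summing to roughly $n(1+\a)\log(\e/\sqrt2)+\frac{1+\a}{\a}\sum_{i\le n}\log\l_i$. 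This changes the coefficient of $n\log\e$ and reverses the sign of the $\sum_{i\le n}\log\overline{\s}_\a(i)$ contribution, so it can be absorbed neither into $C(r,\a,\l_1)$ nor into the single correction $a(\a,n,\e)$; the resulting estimate is strictly weaker than \eqref{mod17} (in Example~1 it would produce a different power of $\e^{-1}$ in the exponent). The paper instead minimizes the product density over the sphere $\sum_{i\le n}x_i^2=\e^2/2$ \emph{jointly}, asserting in \eqref{mod21} that the minimum is attained with all the mass in the last coordinate, so that it equals $(p_\a(0))^{n-1}p_\a\bigl(\e/(\sqrt2\,\l_n^{1/\a})\bigr)$: only one factor leaves the mode, which is exactly where the lone term $a(\a,n,\e)$ comes from, and it is $p_\a(0)$ (not $C_4(\a)$) that feeds into $C_3(\a)$, hence into $\rho$ and $M$. (It is worth noting that for $\a<2$ this identification of the minimizer is itself delicate --- with power tails, spreading the norm over several coordinates can make the product smaller than concentrating it in one --- so that step of the paper deserves scrutiny; but in any case your coordinatewise replacement does not recover \eqref{mod17}.)
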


\begin{proof}
Using the independence of the coordinates we get for all $ n \geq 1 $

\begin{align}
\label{mod19} \mu(S_{\e}) & = P \brc{\sum_{i=1}^{\infty}
\eta_{i}^{2} < \e^{2}} \geq P \brc{\sum_{i=1}^{n} \eta_{i}^{2} <
\e^{2}/2}P \brc{\sum_{i=n+1}^{\infty} \eta_{i}^{2} < \e^{2}/2} =
\nonumber \\
& \int_{\sum_{1}^{n} x_{i}^{2} \leq \frac{1}{2} \e^{2}}
\prod_{i=1}^{n} p_{\a,i}(x_{i}) \l_{n}(dx)
\left(1-P\left(\sum_{i=n+1}^{\infty} \eta_{i}^{2} \geq \frac{1}{2}
\e^{2} \right) \right)
\end{align}
and, taking some $\beta<\alpha,$ we have
\begin{align}
\label{mod20}
  P \brc{\sum_{i=n+1}^{\infty} \eta_{i}^{2} >  \e^{2}/2} \leq \left( \frac{2}{\e^{2}} \right)^{\b/2}
  E \absss{\sum_{i=n+1}^{\infty}\eta_{i}^{2}}^{\b/2}
  \leq \dfrac{2^{\b/2} E \abs{\eta}^{\b}} {\e^{\b}} \sum_{i=n+1}^{\infty}
  \l_{i}^{\b/2}.
\end{align}
It remains to estimate the integral in \eqref{mod19}:
\begin{align}
  & \int_{\sum_{1}^{n} x_{i}^{2} < \frac{1}{2} \e^{2}} \prod_{1}^{n} p_{\a,i}(x_{i}) \l_{n}(dx)
  \geq \left( \frac{\e}{\sqrt{2}} \right)^{n} V_{n} \min_{\sum_{1} x_{i}^{2} \leq \frac{1}{2} \e^{2}} \prod_{1}^{n} p_{\a,i}(x_{i}) \nonumber \\
 & = \frac{\e^{n}}{2^{n/2}} V_{n} \left( \prod_{1}^{n} \l_{i}^{1/\a} \right)^{-1} \min_{\sum_{1}^{n} x_{i}^{2} = \frac{1}{2}\e^{2}}
 \prod_{i=1}^{n} p_{\a} \left( \frac{x_{i}}{\l_{i}^{1/\a}}\right) \nonumber \\
 & = \frac{\e^{n}}{2^{n/2}} \frac{V_{n}}{\prod_{1}^{n} \l_{i}^{1/\a}} \min_{\sum_{1}^{n} y_{i}^{2} \l_{i}^{2/\a}=1}
  \prod_{1}^{n} p_{\a}(y_{i}) \nonumber \\
 \label{mod21}
 & = \frac{\e^{n}}{2^{n/2}} V_{n} \left( \prod_{i=1}^{n} \l_{i}^{1/\a}\right)^{-1} \left( p_{\a}^{(0)}\right)^{n-1}
  p_{\a} \left( \frac{\e}{\sqrt{2} \l_{n}^{1/\a}}\right)
\end{align}
From \eqref{mod19} - \eqref{mod21}, having  in mind (ii), we get
\begin{align*}
 \mu(S_{\e}) & \geq \left(1-2^{\b/2} r \right) 2^{-n/2} \e^{n} V_{n} \left( \prod_{1}^{n} \l_{i}^{1/\a} \right)^{-1} \\
 & \left(p_{\a}(0)\right)^{n-1} p_{\a} \left(\e \left(\sqrt{2} \l_{n}^{1/\a} \right)^{-1}
 \right).
\end{align*}
As above, using Stirling's formulae, after some calculations,  we
arrive at the estimate

\begin{align*}
\mu(S_{\e}) & \geq C(\a, r) \exp \Big \{n \log \frac{\e}{\sqrt{2}} - \sum_{i=1}^{n} \log \wh{\s}_{\a}(i) + n \log C_{3}(\s) \\
&  - \log \sqrt{n} - \frac{1}{2} n(\log n -1) + a(\a, n,\e)\Big \} \\
& \geq C(\a,r) \exp \Big \{n \log \frac{\e}{\sqrt{2}} - \int_{1}^{n} \log \wh{\s}_{\a}(x) dx -\log\wh{\s}_{\a}(1) \\
& - \log \sqrt{n} \int_{1}^{n} \log C_{3}(\a) + \log C(\a)
- \int_{1}^{n} \log \sqrt{x} dx + \frac{1}{2} + a(\a, n, \e)\Big \} \\
& \geq C(\a, r, \l_{1}) \exp \brc{n \log \frac{\e}{\sqrt{2}}
\rho(n) - M(n) - \frac{1}{2} \log n + a(\a, n, \e)},
\end{align*}
which gives us \eqref{mod17}.

The proof of \eqref{mod18} is simpler. We have

\begin{align*}
\mu(D_{\e}) & = \prod_{i=1}^{n} p\brc{\abs{\eta_{i}} > \e} p \brc{\abs{\eta_{k}}>\e, k \geq n+1} \\
& \e^{n} \prod_{i=1}^{n} p_{\a,i}(\e) \left(1- \sum_{i=n+1}^{\infty} p \brc{\abs{\eta_{i}} > \e} \right) \\
& = \e^{n} \left( \prod_{1}^{n} \l_{i}^{1/\a} \right)^{-1} \prod_{i=1}^{n} p_{\a} \left( \frac{\e}{\l_{i}^{1/\a}} \right)
 \left(1 - \e^{-\b} E \abs{\eta}^{\b} \sum_{i=n+1}^{\infty} \l_{i}^{\b/\a}
 \right).
\end{align*}
Since in the following we shall choose $ n $ equal to $ \overline{\eta}_{1}(\e, \b) $, we can use (iii) from which it follows,
that for all $ 1 \leq i \leq \overline{\eta}_{1}(\e, \b) $ we have $ p_{\a} \left( \e \l_{i}^{-1/\a} \right) > C_{4}(\a) $. Then we get

\begin{align*}
\mu(D_{\e}) \geq C(r) \exp \brc{n \log \e + n \log C_{4}(\a) - \sum_{i=1}^{n} \log \wh{\s}_{\a}(i)}
\end{align*}
It is obvious, that from this estimate we can get \eqref{mod18}.
The proposition is proved.
\end{proof}

At the end of this section we give some examples of upper and
lower bounds in the case where we have a given sequence $ \l $. We
omit all calculations and give only final results. As above, $ \mu
\equiv \mu(\a, \l) $ - stable distribution with the ch.f.
\eqref{mod1} and $ \nu = \nu(\a, \l) $ - stable distribution with
the ch. f. \eqref{mod11}.

\begin{example}
Let $ \l_{i} = i^{-\g} $, $ i \geq 1 $, $ \g > 1 $. Then from proposition \ref{prop1}, \ref{prop2}, \ref{prop3} we get
\begin{align}
\label{mod22} \nu\left(S_{\e}(a)\right) \leq C(\a,\g) \e^{-\frac{\a
(\g-2)}{2(2-2\a+\g\a)}} \exp \brc{-C_{5}(\a,\g)
\e^{-\frac{2\a}{2-2\a+\g\a}}},
\end{align}

\begin{align*}
C_{5}(\a,\g) = \dfrac{2-2\a+\g\a }{2\a} \left(\frac{\a}{2}
\right)^{2/(2-2\a+\g\a)},
\end{align*}

\begin{align}
\label{mod23} \mu(D_{\e}) \leq C \frac{1}{\sqrt{\e}} \exp \brc{-
\frac{\pi \g}{2^{1/\a} \G\left(\frac{1}{\a}\right)}
\e^{-\frac{\a}{\g}}},
\end{align}

\begin{align}
\label{mod24} \mu(S_{\e}(a)) \leq C(\a, \g)
\e^{\frac{\a-\g}{2\g-\a}} \exp \brc{-C_{6}(\a,\g)
\e^{-\frac{2\a}{2\g-\a}}},
\end{align}

\begin{align*}
C_{6}(\a,\g) = \frac{2\g-\a}{2\a} \left( \dfrac{\sqrt{\pi} \a}{2
\G\left(\frac{1}{\a}\right)} \right)^{2\a/2\g-\a},
\end{align*}

\begin{align}
\label{mod25} \mu(S_{\e}) \geq C(\a, \d, \g)
\e^{\frac{1+\a+\d/2}{\g^{\d-1}}} \exp \brc{-C_{7}(\a, \g)
\e^{-\frac{\d\a}{\g\d-1}}},
\end{align}
where $ \d=\g_{1}/\g $, $ \g_{1} $ - any number, satisfying $ 1 <
\g_{1} < \g, $ and $ C_{7}(\a, \g) = (2\g-\a)(2\a)^{-1} $.

One can easily verify, that in the case $ \a=2 $ the estimates
\eqref{mod22} and \eqref{mod24} coincide with that one, given in
example 4.1 of \cite{hofmjorg}. We cannot apply \eqref{mod18}
since in this case $ \overline{\eta}_{1}(\e, \b) $ is not defined.
Namely, if $ \wh{\s}(x) = x^{-\g/\\a} $, then in order to satisfy
(ii) $ \overline{\eta}_{1}(\e, \b) $ must be $ \geq C
\e^{-\frac{\a\b}{\g\b-1}} $, where $ \b $ is such that $ \g\b > \a
$, $ \b < \a $, but then (iii) is violated.
\end{example}

\begin{example}
Let $ \l_{i} = e^{-i} $, $ i \geq 1 $. In this example we find $ \overline{\eta}_{1}(\e,\b) $ and apply \eqref{mod18}. We get
\begin{align}
\label{mod26} \mu(D_{\e}) \geq C(\a, \b, r) \exp \brc{-\a \ln^{2}
\frac{1}{\e}}.
\end{align}
When we apply the estimate \eqref{mod5} we find

\begin{align}
\label{mod27} \mu(D_{\e}) \leq C(\a) \e^{-C_{8}(\a)} \exp
\brc{-\frac{\a}{2} \ln^{2} \frac{1}{\e}},
\end{align}
\begin{align*}
C_{8}(\a) = \frac{1}{2} + 2 \a^{2} \ln
\left(\pi\a\left(2^{\frac{1+\a}{\a}} \G\left(\frac{1}{\a} \right)
\right)^{-1} \right).
\end{align*}
The comparison of the estimates \eqref{mod24} - \eqref{mod27}
shows us that there is rather great difference between upper and
lower estimation, and this is the main reason, why we cannot
construct estimates of the quantity $ \abs{F-\mu}(H) $ by means of
pseudo moments, as it was done in the Gaussian case in
\cite{paulII}.
\end{example}

\bigskip

\vskip 2 cm

\footnotesize
\begin{minipage}[t]{90mm}{\scshape
  Department of Mathematics and Informatics,\\
  Vilnius University\\
  Naugarduko 24, 03225 Vilnius, Lithuania\\
  E-mail:} \texttt{vygantas.paulauskas@mif.vu.lt}
\end{minipage}

\end{document}